\def\today{\ifcase\month\or
  January\or February\or March\or April\or May\or June\or
  July\or August\or September\or October\or November\or December\fi
  \space\number\day, \number\year}
 \newtheorem{theorem}{Theorem}
 \newtheorem{lemma}[theorem]{Lemma}
 \newtheorem{proposition}[theorem]{Proposition}
 \newtheorem{corollary}[theorem]{Corollary}
 \theoremstyle{definition}
 \theoremstyle{remark}
 \newcommand{\C}{\mathbb{C}}
 \newcommand{\R}{\mathbb{R}}
 \newcommand{\N}{\mathbb{N}}
 \newcommand{\Z}{\mathbb{Z}}
 \newcommand{\hh}{\tfrac12}
  \renewcommand{\d}{\text{\rm d}}
\newcommand{\re}{{\rm Re}\,}
\begin{document}
\title[A note on the mean values of the derivatives of  $\zeta'/\zeta$]{A note on the mean values of\\ the derivatives of  $\zeta'/\zeta$}
\author[Chirre]{Andr\'{e}s Chirre}
\subjclass[2010]{11M06, 11M26}
\keywords{Riemann zeta-function, pair correlation conjecture, Riemann hypothesis} 

\address{Department of Mathematical Sciences, Norwegian University of Science and Technology, NO-7491 Trondheim, Norway}

\email{carlos.a.c.chavez@ntnu.no }

\allowdisplaybreaks
\numberwithin{equation}{section}

\maketitle  

\begin{abstract}
Assuming the Riemann hypothesis, we obtain a formula for the mean value of the $k$-derivative of  $\zeta'/\zeta$, depending on the pair correlation of zeros of the Riemann zeta-function. This formula allows us to obtain new equivalences to Montgomery's pair correlation conjecture. This extends a result of Goldston, Gonek, and Montgomery where the mean value of $\zeta'/\zeta$ was considered.
\end{abstract}


\section{Introduction}
Let $\zeta(s)$ denote the Riemann zeta-function. The Riemann hypothesis (RH) states that the non-trivial zeros $\rho$ of $\zeta(s)$ have the form $\rho=\hh+i\gamma$ with $\gamma\in\R$.  We will assume RH throughout this paper.

\subsection{Montgomery's pair correlation conjecture} 
In $1973$, Montgomery \cite{M} defined the pair correlation function 
$$
	\displaystyle N(\beta,T):=\!\!\!\sum_{\substack{ 0<\gamma,\gamma'\le T \\ 0<\gamma-\gamma' \le \frac{2\pi \beta}{\log T} }} \!1,
$$
where the double sum runs over the ordinates $\gamma,\gamma'$ of two sets of non-trivial zeros of $\zeta(s)$, counted with multiplicity. Since there are $\displaystyle \sim T\log T/(2\pi)$ non-trivial zeros of $\zeta(s)$ with ordinates in the interval $(0,T]$ as $T\to \infty$, the function $N(\beta,T)$ counts the number of pairs of zeros within $\beta$ times the average spacing between zeros. The pair correlation conjecture of Montgomery asserts that
\begin{align} \label{PCC}
	\displaystyle N(\beta,T) \sim  \frac{T \log T}{2\pi}  \int_0^\beta \left\{ 1 - \Big( \frac{\sin \pi u}{\pi u}\Big)^{\!2} \right\} \mathrm{d}u, \,\,\,\, \text{as } T\to \infty \,\,\text{ for any fixed} \   \beta>0.
\end{align} 
Assuming RH, there are several known equivalences\footnote{\,\,\,\,\,\,For an equivalence of the pair correlation conjecture related to the asymptotic formula for an integral of Selberg connected with primes in short intervals, see \cite{GaMu, G, GGM}.} to this conjecture. Define the function
\[
F(\alpha,T) := \frac{2\pi}{T\log T} \sum_{0<\gamma,\gamma'\le T} T^{i \alpha (\gamma-\gamma')} w(\gamma-\gamma'),
\] 
introduced by Montgomery \cite{M}, where $\alpha \in \mathbb{R}$, $T\geq2$, and $w(u)=4/(4+u^2)$. Using this function, Goldston \cite{G} showed that the pair correlation conjecture \eqref{PCC} is equivalent to 
\begin{equation}\label{22_09pm}
	\displaystyle \int_{b}^{b+\ell}\!\!\! F(\alpha,T) \,\d\alpha  \sim  \ell, \,\,\,\, \text{as } T\to \infty \, \,\text{ for any fixed} \  b\ge 1 \ {\rm and} \ \ell > 0.
\end{equation}
Another equivalence for the pair correlation conjecture is related to the second moment of $\zeta'/\zeta$. In fact, Goldston, Gonek, and Montgomery  \cite[Theorem 3]{GGM} established that the pair correlation conjecture is  equivalent to the asymptotic
\begin{equation} \label{22_32pm}
	I(a,T) := \int_1^T \left| \frac{\zeta'}{\zeta}\!\left( \frac{1}{2}+\frac{a}{\log T}+it\right) \right|^2  \!\!\mathrm{d}t \sim  \left( \frac{1-e^{-2a}}{4 a^2} \right) T \log^2 T, \,\,\,\, \text{as } T\to \infty \,\,\text{ for any fixed } a>0.
\end{equation}

\smallskip

Since Montgomery's pair correlation conjecture remains a difficult open problem, the efforts have thus been concentrated in obtaining upper and lower bounds for the functions $N(\beta,T)$, $\int_{b}^{b+\ell}F(\alpha,T)\,\d\alpha$, and $I(a,T)$ in place of asymptotic formulae (see for instance \cite{CCLM, CCCM, Ga, G2, GG, GGM}).

\smallskip

\subsection{Mean values of the $k$-derivative of  $\zeta'/\zeta$} The main goal in this paper is to extend the technique developed by Goldston, Gonek, and Montgomery in \cite{GGM} to get new equivalences of the pair correlation conjecture, related to the mean values of the derivatives of $\zeta'/\zeta$. Let $k\geq 0$ be an integer. For $a>0$ and $T\geq2$, define the second moment of the $k$-derivative of $\zeta'/\zeta$ as
$$
I_k(a,T)=\int_1^T \left|\left(\dfrac{\zeta'}{\zeta}\right)^{\!\!\!(k)}\!\!\!\left( \frac{1}{2}+\frac{a}{\log T}+it\right) \right|^2  \!\!\mathrm{d}t.
$$
With this notation, we have $I_0(a,T)=I(a,T)$.

\begin{theorem} \label{Prin_theorem}
Assume RH and let $k\geq 0$ be an integer. The following statements are equivalent:
\begin{enumerate}
	\smallskip
	\item[\textup{(I)}] $\displaystyle \int_{b}^{b+\ell}\!\! \! F(\alpha,T) \,\d\alpha  \sim  \ell, \,\,\,\, \text{as } T\to \infty \, \text{ for any fixed} \  b\ge 1 \ {\rm and} \ \ell > 0$;
	\smallskip
	\item[\textup{(II)}] \ $\displaystyle I_k(a,T) \sim  \Bigg(\!\dfrac{(2k+1)!}{(2a)^{2k+2}}-\sum_{m=1}^{2k+1}\dfrac{m\,(2k)!}{(2k+1-m)!}\dfrac{e^{-2a}}{(2a)^{m+1}}\!\Bigg)T(\log T)^{2k+2},\,\,\,\, \text{as } T\to \infty \,\text{ for any fixed } a>0$.
\end{enumerate}
\end{theorem}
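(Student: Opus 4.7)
The strategy is to extend the Goldston--Gonek--Montgomery method to general $k\geq 0$. The starting point, valid under RH, is the truncated partial-fraction representation
\begin{equation*}
\left(\frac{\zeta'}{\zeta}\right)^{\!\!(k)}\!(s) = (-1)^k k!\sum_{|\gamma - t| \leq 1}\frac{1}{(s-\rho)^{k+1}} + E_k(s),
\end{equation*}
for $s=\sigma+it$ with $|t|\geq 2$, where $E_k(s)$ is an error of controlled size. Squaring and integrating over $t\in[1,T]$, the inner $t$-integral in the principal double sum is evaluated by contour integration (after extending harmlessly to $\R$) to
\begin{equation*}
\int_{-\infty}^{\infty}\frac{\d t}{(\sigma + i(t-\gamma))^{k+1}(\sigma - i(t-\gamma'))^{k+1}} = \frac{2\pi (2k)!}{(k!)^2 (2\sigma - i(\gamma-\gamma'))^{2k+1}},
\end{equation*}
with $\sigma=a/\log T$. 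Rescaling by $\log T$ and applying Montgomery's identity
\begin{equation*}
\sum_{0<\gamma,\gamma'\leq T}w(\gamma-\gamma')\widetilde g(\gamma-\gamma') = \frac{T\log T}{2\pi}\int_{-\infty}^\infty F(\alpha,T)g(\alpha)\,\d\alpha, \quad \widetilde g(u):=\int g(\alpha)T^{i\alpha u}\d\alpha,
\end{equation*}
to the test function $g(\alpha) = (\log T)^{2k+1}|\alpha|^{2k}e^{-2a|\alpha|}/(2(2k)!)$ -- chosen so that $\widetilde g$ reproduces the residue kernel -- and verifying that both removing the weight $w(u)=4/(4+u^2)$ and the contribution of $E_k$ cost only $o(T(\log T)^{2k+2})$, one arrives at
\begin{equation*}
I_k(a,T) = T(\log T)^{2k+2}\int_0^\infty F(\alpha,T)\,\alpha^{2k}e^{-2a\alpha}\,\d\alpha + o\big(T(\log T)^{2k+2}\big).
\end{equation*}

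For the forward direction (I)$\Rightarrow$(II), split the integral at $\alpha=1$. On $[0,1]$, Montgomery's unconditional theorem $F(\alpha,T) = (1+o(1))(T^{-2\alpha}\log T+\alpha)$ converts the integral to $\int_0^1 \alpha^{2k+1}e^{-2a\alpha}\d\alpha$ plus a contribution from the bump $T^{-2\alpha}\log T$ which, after the change of variable $u=2\alpha\log T$, is of order $(\log T)^{-2k}$ and is absorbed into $o(1)$ (for $k=0$ it must be paired with the cross term from $E_k$). On $[1,\infty)$, letting $G(\alpha,T):=\int_1^\alpha F(\beta,T)\,\d\beta$, integration by parts combined with $G(\alpha,T)\sim\alpha-1$ from (I), together with the polynomial-exponential decay of $\alpha^{2k}e^{-2a\alpha}$, gives
\begin{equation*}
\int_1^\infty F(\alpha,T)\alpha^{2k}e^{-2a\alpha}\d\alpha \sim \int_1^\infty \alpha^{2k}e^{-2a\alpha}\d\alpha.
\end{equation*}
Summing the two ranges and using $\int_1^\infty \alpha^m e^{-2a\alpha}\d\alpha = e^{-2a}\sum_{j=0}^m m!/((m-j)!(2a)^{j+1})$, the resulting constant simplifies by a shift of summation index to exactly $\tfrac{(2k+1)!}{(2a)^{2k+2}} - \sum_{m=1}^{2k+1}\tfrac{m(2k)!}{(2k+1-m)!}\tfrac{e^{-2a}}{(2a)^{m+1}}$, yielding (II). For the converse (II)$\Rightarrow$(I), the same identity combined with the unconditional $[0,1]$ contribution isolates the asymptotic $\int_1^\infty F(\alpha,T)\alpha^{2k}e^{-2a\alpha}\,\d\alpha\sim\int_1^\infty \alpha^{2k}e^{-2a\alpha}\,\d\alpha$ for every $a>0$; a Tauberian argument (leveraging $F\geq 0$ and standard unconditional upper bounds on $F$) then recovers $\int_b^{b+\ell}F(\alpha,T)\,\d\alpha\sim\ell$.

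The principal technical obstacle is the careful treatment of the error term $E_k$ and the weight $w$. The squared error $|E_k|^2$, integrated over $[1,T]$, can in principle contribute at the same order $T(\log T)^{2k+2}$ as the main term, and the cross term $\int_1^T \re\!\big(E_k\cdot\overline{\text{zero sum}}\big)\,\d t$ must be estimated delicately; the hope is that its contribution is either of smaller order or combines cleanly with the $T^{-2\alpha}\log T$ bump so as to produce the stated coefficient (this is what fixes the $k=0$ case of GGM and must be adapted to each $k$). Similarly, removing the Montgomery weight requires controlling $\sum_{\gamma,\gamma'}(1-w(\gamma-\gamma'))\cdot|\text{kernel}|$ uniformly in $k$, which becomes more delicate as the kernel $|\alpha|^{2k}e^{-2a|\alpha|}$ is less localized for larger $k$. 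The Tauberian step in the converse must be set up robustly enough to handle the lower-order fluctuations of $F(\alpha,T)$ around its averaged behavior.
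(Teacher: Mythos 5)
Your contour-integral route to the representation formula is a genuine alternative to what the paper does. You directly integrate the double sum over zeros of $(s-\rho)^{-(k+1)}(\overline{s}-\overline{\rho'})^{-(k+1)}$ and evaluate
$\int_{-\infty}^\infty \frac{\mathrm{d}t}{(\beta+i(t-\gamma))^{k+1}(\beta-i(t-\gamma'))^{k+1}}=\frac{2\pi(2k)!}{(k!)^2(2\beta-i(\gamma-\gamma'))^{2k+1}}$
by residues (this computation is correct), then match $\re(2\beta-iu)^{-(2k+1)}$ to the Fourier transform of $\alpha^{2k}e^{-2a|\alpha|}$. The paper instead writes $|w|^2=2(\re w)^2-\re(w^2)$, disposes of the second piece by a contour shift, and expresses $\re(\zeta'/\zeta)^{(k)}$ as a sum of derivatives of the Poisson kernel $h_b$ and the auxiliary kernel $\ell_b$, using their explicit Fourier transforms. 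Your route is arguably more direct but the paper's makes the subsequent error estimates cleaner. Both land on the same identity $I_k(a,T)=\big(\int_0^1\alpha^{2k+1}e^{-2a\alpha}\,\mathrm{d}\alpha+\int_1^\infty\alpha^{2k}e^{-2a\alpha}F(\alpha,T)\,\mathrm{d}\alpha+o(1)\big)T(\log T)^{2k+2}$.

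That said, there are two genuine gaps. First, you explicitly flag the control of $E_k$, of the cross term, and of the removal of Montgomery's weight $w$ as a ``principal technical obstacle'' whose resolution you only hope for. These bounds are not optional; without them the representation formula is not established. The paper closes this with Lemma~\ref{21_43pm}, giving $|(h_b)^{(2m)}(x)|\ll_m b^{-(2m-1)}(b^2+x^2)^{-1}$ and the analogous bound for $\ell_b$, which feeds into the estimate for the contributions off the diagonal and for $1-w$, producing the explicit error $O\big(T(\log T)^{2k+1}/a^{2k-1}+(\log T)^{2k+4}/a^{2k+2}\big)$. You would need an analogue of this for your $(2\beta-iu)^{-(2k+1)}$ kernel. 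Second, and more importantly, you dismiss the converse (II)$\Rightarrow$(I) with ``a Tauberian argument.'' After translating to $\int_0^\infty F(\alpha+1,T)(\alpha+1)^{2k}e^{-2a\alpha}\,\mathrm{d}\alpha\sim\int_0^\infty(\alpha+1)^{2k}e^{-2a\alpha}\,\mathrm{d}\alpha$ for all $a>0$, the Tauberian lemma of Goldston--Gonek--Montgomery (which handles a pure exponential weight, i.e.\ $G\equiv 1$) does not apply for $k\geq 1$: the polynomial factor $(\alpha+1)^{2k}$ cannot be removed by a change of variables or by differentiating in $a$ under the asymptotic. The paper's Lemma~\ref{16_10pm} is precisely the required generalization: Karamata's method applied to $h(u)=1/\big(u\,G(-\log u)\big)$ with a Weierstrass approximant $P$ satisfying $h\leq P$ and $\int_0^1(P-h)^2=O(\varepsilon)$, using the Cauchy--Schwarz inequality against $G(-\log u)$. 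This is the new ingredient that makes the theorem work for all $k$, and it must be stated and proved; it is not a ``standard'' Tauberian argument.
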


\!\!\!\!\!\!\!Note that Theorem \ref{Prin_theorem} gives new equivalences for the pair correlation conjecture. When $k=0$, it recovers the equivalence for the asymptotic formula \eqref{22_32pm}. Moreover, our result shows the dependence of the asymptotic formulae for $I_k(a,T)$ for all values of $k\geq 0$.

\begin{corollary} Assume RH. Then, the asymptotic formula \textup{(II)} holds for some $k\geq 0$ if and only if it holds for all $k\geq 0$.
\end{corollary}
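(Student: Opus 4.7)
The key observation is that statement (I) in Theorem \ref{Prin_theorem} — the integrated asymptotic for $F(\alpha,T)$ — does not depend on the parameter $k$. Theorem \ref{Prin_theorem} therefore asserts that for every integer $k\geq 0$, the single $k$-free condition (I) is equivalent to the $k$-dependent asymptotic (II). Consequently the family of statements $\{(\textup{II})_k\}_{k\geq 0}$ is linked by a common pivot: all of them are equivalent to (I), and hence to each other.

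Concretely, the plan is simply to chain the two implications supplied by Theorem \ref{Prin_theorem}. Suppose (II) holds for some particular integer $k_0\geq 0$. Applying the implication $\textup{(II)}\Rightarrow \textup{(I)}$ from Theorem \ref{Prin_theorem} with $k=k_0$ yields that (I) holds. Now, for any other integer $k\geq 0$, applying the implication $\textup{(I)}\Rightarrow \textup{(II)}$ from Theorem \ref{Prin_theorem} with this new value of $k$ produces the asymptotic (II) at level $k$. The converse direction is immediate. There is no obstacle here beyond Theorem \ref{Prin_theorem} itself; the content of the corollary is purely the $k$-independence of the condition (I), which allows the single equivalence to be transported between any two values of $k$.
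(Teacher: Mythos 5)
Your proof is correct and matches the paper's (implicit) reasoning exactly: the corollary is an immediate consequence of Theorem \ref{Prin_theorem}, since the $k$-independent condition (I) serves as a pivot linking all the statements $(\textup{II})_k$ to one another. Nothing further is needed.
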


\!\!\!\!\!\!\!One can estimate the right order of magnitude for $I_k(a,T)$, as $T\to \infty$, for a fixed $a>0$. In fact, using Proposition \ref{GGM_Lemma} and the uniform estimate (see, for instance \cite{GG})
\begin{align} \label{17:56pm}
\int_{1}^{\beta}\!\!F(\alpha,T)\,\d\alpha \ll \beta,
\end{align} 
it follows that for fixed $k\geq 0$ and $a>0$, we have $I_k(a,T) \asymp_{k,a} T (\log T)^{2k+2}$.

\medskip

On the other hand, Farmer proved a relation between $I(a,T)$ and a certain discrete mean value of $\zeta'/\zeta$. For $k\geq 0$ an integer, define
$$
D_{k}(a,T)=\sum_{0<\gamma\le T}\left(\dfrac{\zeta'}{\zeta}\right)^{\!\!\!(2k)}\!\!\!\left( \frac{1}{2}+\frac{a}{\log T}+i\gamma\right).
$$
Then, Farmer \cite[Lemma 3b]{Fa1} established that, for a fixed $a>0$,
\begin{equation}  \label{23_04pm}
	D_0(a,T)=\dfrac{1}{2\pi}\, I_0 \Big(\dfrac{a}{2},T\Big)+ O\big(T^\varepsilon\big), \,\,\,\,\text{for} \,\,T\geq 2\,\,\, \mbox{and}\,\, \varepsilon>0\,\, \mbox{sufficiently small}.
\end{equation} 
In particular, using \eqref{22_32pm} we obtain that the pair correlation conjecture is equivalent to \begin{equation*} 
	D_0(a,T) \sim  \left( \frac{1-e^{-a}}{2\pi a^2} \right) T \log^2 T, \,\,\,\, \text{as } T\to \infty \,\text{ for any fixed } a>0.
\end{equation*}
Extending \eqref{23_04pm} for $D_k(a,T)$ and using Theorem \ref{Prin_theorem} we arrive at the following corollary.
\begin{corollary} \label{17_05pm}
	Assume RH and let $k\geq 0$ be an integer. The following statements are equivalent:
	\begin{enumerate}
		\smallskip
		\item[\textup{(I)}] $\displaystyle \int_{b}^{b+\ell}\!\!\!F(\alpha,T) \,\d\alpha  \sim  \ell, \,\,\,\, \text{as } T\to \infty \,\text{ for any fixed} \  b\ge 1 \ {\rm and} \ \ell > 0$;
		\smallskip
		\item[\textup{(II)}] \ $\displaystyle D_k(a,T) \sim  \dfrac{1}{2\pi}\Bigg(\!\dfrac{(2k+1)!}{a^{2k+2}}-\sum_{m=1}^{2k+1}\dfrac{m\,(2k)!}{(2k+1-m)!}\dfrac{e^{-a}}{a^{m+1}}\!\Bigg)T(\log T)^{2k+2},\,\,\text{as } T\to \infty \,\text{ for any fixed } a>0$.
	\end{enumerate}
\end{corollary}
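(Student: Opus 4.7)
The plan is to reduce the Corollary to Theorem \ref{Prin_theorem} by first establishing a generalization of Farmer's identity \eqref{23_04pm} valid for all $k\ge 0$:
\begin{equation*}
D_k(a,T)=\dfrac{1}{2\pi}\,I_k\!\left(\dfrac{a}{2},T\right)+O\big(T^\varepsilon\big),\qquad T\ge 2,
\end{equation*}
for any fixed $a>0$ and sufficiently small $\varepsilon>0$. Granting this extension, Theorem \ref{Prin_theorem} applied with $a$ replaced by $a/2$ asserts that (I) is equivalent to
\[
I_k\!\left(\dfrac{a}{2},T\right)\sim \left(\dfrac{(2k+1)!}{a^{2k+2}}-\sum_{m=1}^{2k+1}\dfrac{m\,(2k)!}{(2k+1-m)!}\dfrac{e^{-a}}{a^{m+1}}\right)T(\log T)^{2k+2},
\]
and dividing by $2\pi$ produces precisely the constant appearing in statement (II). Since both sides of the displayed identity have size $T(\log T)^{2k+2}$, the $O(T^\varepsilon)$ error is absorbed into the $\sim$ relation, and (I)$\Leftrightarrow$(II) follows.

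To prove the extension of \eqref{23_04pm} to general $k$, I would adapt the contour-integration argument of \cite[Lemma 3b]{Fa1}. Fix $c>1$ and consider
\[
\dfrac{1}{2\pi i}\int_{\cc} \left(\dfrac{\zeta'}{\zeta}\right)^{\!(2k)}\!\!\left(s+\dfrac{a}{\log T}\right)\dfrac{\zeta'}{\zeta}(s)\,\d s
\]
over the positively-oriented rectangle $\cc$ with horizontal edges at heights $t=1$ and $t=T$ (with $T$ chosen to avoid ordinates of zeros) and vertical edges at $\re s=1-c$ and $\re s=c$. Under RH, $\zeta'/\zeta(s)$ has simple poles of residue $1$ at each non-trivial zero $\rho=\h+i\gamma$ inside $\cc$, so the residue theorem identifies the integral with $D_k(a,T)$, up to $O(1)$ coming from the pole at $s=1$.

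The four sides of $\cc$ are then estimated separately. On the right edge $\re s=c$, both factors are absolutely convergent Dirichlet series and the standard diagonal analysis yields a contribution of size $O(T^\varepsilon)$. The top and bottom edges have length $O(1)$ and are controlled via the standard RH bound $(\zeta'/\zeta)^{(j)}(\sigma+iT)\ll(\log T)^{j+2}$ off zero ordinates. On the left edge $\re s=1-c$, the functional equation $\zeta'/\zeta(s)=G(s)-\zeta'/\zeta(1-s)$, where $G$ collects the $\Gamma$-factor terms, reflects both factors to the right half-plane; using the symmetry $\overline{(\zeta'/\zeta)^{(k)}(\h+\alpha+it)}=(\zeta'/\zeta)^{(k)}(\h+\alpha-it)$ and shifting the resulting contour back symmetrically to $\re s=\h+\frac{a}{2\log T}$ produces the dominant contribution $\frac{1}{2\pi}\,I_k(a/2,T)$.

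The main obstacle is the bookkeeping for the many cross terms produced when differentiations of total order up to $2k$ fall on the $\Gamma$-factor: each such term is pointwise $O((\log|t|)^{2k+2})$, and their integrals against $\zeta'/\zeta$-type factors must be shown to contribute only $o(T(\log T)^{2k+2})$. These bounds follow from Cauchy--Schwarz combined with Proposition \ref{GGM_Lemma} and \eqref{17:56pm}, which yield the first-moment estimate $\int_1^T|(\zeta'/\zeta)^{(j)}(\h+c/\log T+it)|\,\d t\ll T(\log T)^{j+1}$, together with Stirling's estimates for the $\Gamma$-factor derivatives. Once all cross terms are shown to be negligible, the extended Farmer identity follows and the Corollary is established.
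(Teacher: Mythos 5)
Your overall logic chain (reduce the corollary to Theorem \ref{Prin_theorem} via an identity $D_k(a,T)=\frac{1}{2\pi}I_k(a/2,T)+\text{small}$) matches the paper's, and the algebraic bookkeeping with the constants is correct. However, the contour argument you sketch for the identity itself contains a genuine gap, and it is precisely the point where the paper's proof takes a different route.

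You integrate $(\zeta'/\zeta)^{(2k)}\!\big(s+\tfrac{a}{\log T}\big)\,\zeta'/\zeta(s)$ over a rectangle with vertical edges at $\re s=1-c$ and $\re s=c$ for $c>1$, and claim that the residue theorem produces exactly $D_k(a,T)$ (plus $O(1)$ from $s=1$). But this contour also encloses the poles of the \emph{first} factor, located at $s=\rho-\tfrac{a}{\log T}$, i.e.\ on the line $\re s=\h-\tfrac{a}{\log T}\in(1-c,c)$. These are poles of order $2k+1$. A short computation (the residue of a pole of order $n$ times a regular factor picks out only the leading Laurent coefficient against the $(n-1)$-st Taylor coefficient) gives residue $m_\rho\big(\zeta'/\zeta\big)^{(2k)}\!\big(\rho-\tfrac{a}{\log T}\big)$ at each such point. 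When you now use the reflection $\overline{(\zeta'/\zeta)^{(2k)}(\bar s)}=(\zeta'/\zeta)^{(2k)}(s)$ and the functional equation $\zeta'/\zeta(s)=\tfrac{\chi'}{\chi}(s)-\zeta'/\zeta(1-s)$, you find $\sum_\rho m_\rho(\zeta'/\zeta)^{(2k)}(\rho-\tfrac{a}{\log T})=-\overline{D_k(a,T)}+\sum_\rho m_\rho\overline{(\chi'/\chi)^{(2k)}(\bar\rho-\tfrac{a}{\log T})}$. Since $D_k(a,T)$ is real, the two families of residues cancel each other's $D_k$ contribution, and the residue sum is \emph{not} $D_k(a,T)$ — it is a $\chi'/\chi$-type sum. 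Your identification of the residue sum with $D_k(a,T)$ therefore fails, and the argument as written does not close.

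The paper avoids this entirely by running the argument in the opposite direction and with a differently placed contour. It starts from $I_k(a,T)=\int_1^{T_n}(\zeta'/\zeta)^{(k)}(\h+\tfrac{a}{\log T}+it)\,(\zeta'/\zeta)^{(k)}(\h+\tfrac{a}{\log T}-it)\,\dt$, rewrites this as $\frac{1}{i}\int$ along the vertical line $\re s=\h-\tfrac{a}{\log T}$ of $(\zeta'/\zeta)^{(k)}(s+\tfrac{2a}{\log T})(\zeta'/\zeta)^{(k)}(1-s)$, and then — a step entirely absent from your sketch — integrates by parts $k$ times so that all $2k$ derivatives land on the \emph{shifted} factor, giving $(\zeta'/\zeta)^{(2k)}(s+\tfrac{2a}{\log T})\,\zeta'/\zeta(1-s)$ up to an acceptable error. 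Only then is the residue theorem applied, on the rectangle with vertices $\h-\tfrac{a}{\log T}+i$, $2+i$, $2+iT_n$, $\h-\tfrac{a}{\log T}+iT_n$. This rectangle is chosen so that the poles of $(\zeta'/\zeta)^{(2k)}(s+\tfrac{2a}{\log T})$, which lie on $\re s=\h-\tfrac{2a}{\log T}$, are strictly \emph{outside} (to the left of) the contour, while the simple poles of $\zeta'/\zeta(1-s)$ on $\re s=\h$ are inside. Thus the residue sum really does give $2\pi D_k(2a,T)$, the remaining edges are small (the right edge uses the functional equation and absolutely convergent Dirichlet series, the horizontal edges use the special ordinates $T_n$ from \eqref{19_33} and the bound \eqref{2_48pm}), and one obtains $I_k(a,T)=2\pi D_k(2a,T)+O((\log T)^{2k+4}/a^{2k+2})$. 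For fixed $a$, that error is sharper than the $O(T^\varepsilon)$ you posited (this is not a gap in your write-up, just worth noting), and it suffices for Theorem \ref{Prin_theorem} to be applied.

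To repair your proposal you would either need to move the left edge of your rectangle to the right of $\h-\tfrac{a}{\log T}$ so the derivative-factor poles are excluded (and then verify the resulting line integral is, after the correct change of variables, $\tfrac{1}{2\pi}I_k(a/2,T)$), or simply reverse the direction of the argument as the paper does — starting from the explicit line-integral representation of $I_k$ and performing the integration-by-parts step to load all the derivatives onto the factor whose poles you wish to exclude.
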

\vspace{0cm}

\subsection{Related results} We would like to point out that for some objects related to the Riemann zeta-function, there are results where one relates the asymptotic formula of their second moments to suitably weighted integrals of $F(\alpha,T)$. For instance, Goldston \cite[Theorem 1]{G2} showed, under RH, that
\begin{equation*} 
	\int_0^T |S(t)|^2 \mathrm{d}t = \frac{T}{2\pi^2}\log\log T + \frac{T}{2\pi^2}\left[\int_1^\infty \frac{F(\alpha,T)}{\alpha^{2}}\d \alpha+\gamma_0-\sum_{m=2}^\infty\sum_{p}\left(\dfrac{1}{m}-\dfrac{1}{m^2}\right)\dfrac{1}{p^m}\right] + o(T), \,\,\,\, \text{as } \,T\to \infty,
\end{equation*}
where $\pi S(t)$ denote the argument of the Riemann zeta-function at the point $\hh+it$, and $\gamma_0$ is Euler's constant. Recently, this has been extended to the iterates of the function $S(t)$ (see \cite[Theorem 1]{ChiOs}). Note that, assuming \eqref{22_09pm}, by integration by parts and  \eqref{17:56pm} we get 
\begin{equation*} 
	\int_0^T |S(t)|^2 \mathrm{d}t = \frac{T}{2\pi^2}\log\log T + \frac{T}{2\pi^2}\left[1+\gamma_0-\sum_{m=2}^\infty\sum_{p}\left(\dfrac{1}{m}-\dfrac{1}{m^2}\right)\dfrac{1}{p^m}\right] + o(T), \,\,\,\, \text{as } \,T\to \infty.
\end{equation*}
We refer the reader to Farmer \cite{Fa1, Fa2} for other results related to pair correlation and certain asymptotic formulae. 

\smallskip

\section{The representation formula for $I_k(a,T)$}

In this section, we establish a representation formula for the second moment of the $k$-derivative of $\zeta'/\zeta$, related to the function $F(\alpha,T)$. It can be seen as an extension of \cite[Theorem 1]{GGM}. The Poisson kernel plays an important role in our formula. For $b>0$, let $h_b:\R\to\R$ be the Poisson kernel defined as
\begin{align}\label{Def_Poisson_kernel_beta}
	h_b(x)=\dfrac{b}{b^2+x^2},
\end{align}
and let $\ell_b:\R\to\R$ be an auxiliary function\footnote{\,\,\,\,\,\,The function $\ell_b$ has previously been used to bound the real part of the derivative of $\zeta '/\zeta$ (see \cite[Theorem 3]{CGL}).} defined as
\begin{align} \label{19_13pm} 
	\ell_b(x)=\dfrac{b^2-x^2}{(b^2+x^2)^2}.
\end{align} 

\!\!\!\!\!\!\!The following technical lemma about the derivatives of $h_b$ and $\ell_b$ will be useful for us. 
\begin{lemma} \label{21_43pm}
Let $k\geq 0$ be an even integer. Then, for all $x\in\R$ we have
$$
\big|(h_b)^{(k)}(x)\big|\ll_k\dfrac{1}{b^{k-1}(b^2+x^2)}, \,\,\,\,\,\,\text{and} \,\,\,\,\, \,\,\,	\big|(\ell_b)^{(k)}(x)\big|\ll_k\dfrac{1}{b^{k}(b^2+x^2)}.
$$
\end{lemma}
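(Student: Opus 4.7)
My plan is to use the partial fraction decomposition over $\C$. Direct computation gives
\[
h_b(x)=\frac{1}{2i}\!\left(\frac{1}{x-ib}-\frac{1}{x+ib}\right),\qquad \ell_b(x)=-\frac{1}{2}\!\left(\frac{1}{(x-ib)^2}+\frac{1}{(x+ib)^2}\right),
\]
the second identity being easily checked by expanding $(x\mp ib)^{-2}=(x\pm ib)^2/(x^2+b^2)^2$ and summing. Once these are in hand, the $k$-th derivative in $x$ is immediate:
\[
h_b^{(k)}(x)=\frac{(-1)^k k!}{2i}\!\left(\frac{1}{(x-ib)^{k+1}}-\frac{1}{(x+ib)^{k+1}}\right),\qquad \ell_b^{(k)}(x)=\frac{(-1)^{k+1}(k+1)!}{2}\!\left(\frac{1}{(x-ib)^{k+2}}+\frac{1}{(x+ib)^{k+2}}\right).
\]

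Next, I would invoke the identity $|x\pm ib|=\sqrt{x^2+b^2}$ and the triangle inequality to obtain the crude pointwise bounds
\[
|h_b^{(k)}(x)|\le \frac{k!}{(x^2+b^2)^{(k+1)/2}},\qquad |\ell_b^{(k)}(x)|\le \frac{(k+1)!}{(x^2+b^2)^{(k+2)/2}}.
\]
Note that these hold for every integer $k\ge 0$, so the evenness hypothesis in the statement plays no role here (it is presumably included only because even derivatives are the ones used later).

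Finally, I would split off one factor of $(x^2+b^2)$ and use $b^2\le b^2+x^2$ to bound the remaining power. Concretely, for $k\ge 1$,
\[
\frac{1}{(x^2+b^2)^{(k+1)/2}}=\frac{1}{(x^2+b^2)^{(k-1)/2}}\cdot\frac{1}{x^2+b^2}\le \frac{1}{b^{k-1}(x^2+b^2)},
\]
and similarly
\[
\frac{1}{(x^2+b^2)^{(k+2)/2}}\le \frac{1}{b^k(x^2+b^2)}.
\]
For $k=0$, the claimed bound for $h_b$ is just the definition of $h_b$ and there is nothing to prove; the bound for $\ell_b$ follows from $|b^2-x^2|\le b^2+x^2$. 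Since every step is either an algebraic identity or the elementary inequality $b\le\sqrt{x^2+b^2}$, there is no real obstacle here; the only thing worth double-checking is the sign computation in the partial fraction decomposition of $\ell_b$, which is where an error is most likely to slip in.
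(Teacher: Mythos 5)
Your proof is correct, and it takes a genuinely different route from the paper's. The paper proves the bound for $h_1$ by induction on $k$, showing $(h_1)^{(k)}(x)=P(x)/(1+x^2)^{2^k}$ with a degree bound on $P$, deduces the pointwise estimate for even $k$, and then rescales via $h_b(x)=h_1(x/b)/b$; the proof for $\ell_b$ is stated to be analogous. You instead split $h_b$ and $\ell_b$ into complex partial fractions with poles at $\pm ib$, so the $k$-th derivative is a finite sum of terms $(x\mp ib)^{-(k+1)}$ (resp.\ $(x\mp ib)^{-(k+2)}$), and the bound drops out immediately from $|x\pm ib|=(x^2+b^2)^{1/2}$ together with $b\le(x^2+b^2)^{1/2}$. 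Your version is cleaner in a few respects: it produces explicit constants ($k!$ and $(k+1)!$), it handles $h_b$ and $\ell_b$ uniformly rather than ``similarly,'' it avoids the slightly awkward induction tracking the denominator power $(1+x^2)^{2^k}$, and, as you correctly note, it shows the estimates hold for all integers $k\ge0$, so the evenness hypothesis is not needed for the lemma itself (it is there only because only even derivatives arise in the application). The computations you flagged as worth double-checking (the partial fraction for $\ell_b$ and its sign) are indeed correct: $(x-ib)^{-2}+(x+ib)^{-2}=2(x^2-b^2)/(x^2+b^2)^2$, and multiplying by $-1/2$ gives $\ell_b$.
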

\begin{proof}
Let us prove the first estimate for $b=1$. For any $k\geq 0$, it is easy to see by induction that
$$
(h_1)^{(k)}(x)=\dfrac{P(x)}{(1+x^2)^{2^k}},
$$
where $P$ is a polynomial of degree at most $2^{k+1}-k-2$. In particular, when $k=2m$ with $m\in\Z$ we have
$$
\big|(h_1)^{(2m)}(x)\big|\ll_m \dfrac{1}{(1+x^2)^{m+1}}.
$$
In the general case, since $h_b(x)=h_1(x/b)/b$, it follows that
$$
\big|(h_b)^{(2m)}(x)\big|=\dfrac{1}{b^{2m+1}}\Big|(h_1)^{(2m)}\Big(\frac{x}{b}\Big)\Big|\ll_m \dfrac{b}{(b^2+x^2)^{m+1}}\leq\dfrac{1}{b^{2m-1}(b^2+x^2)}. 
$$
We conclude the first estimate. The proof of the second estimate is similar.
\end{proof}

\smallskip

\begin{proposition} \label{GGM_Lemma} Assume RH and let $k\geq 1$ be a fixed integer. Then, for $0<a\ll 1$ and $T\geq 3$ we have
\begin{equation*} 	I_k(a,T)=\dfrac{(-1)^k\!\!\!}{2^{2k}\pi^{2k}}\,(\log T)^{2k+1}\!\!\!\!\!\!\sum_{0<\gamma,\gamma'\leq T}\!\big(h_{a/\pi}\big)^{\!(2k)}\bigg((\gamma-\gamma')\dfrac{\log T}{2\pi}\bigg)w(\gamma-\gamma') + O\bigg(\dfrac{T(\log T)^{2k+1}}{a^{2k-1}}+\dfrac{(\log T)^{2k+4}}{a^{2k+2}}\bigg),
	\end{equation*}
where $h_{a/\pi}$ is defined in \eqref{Def_Poisson_kernel_beta} and $w(u)=4/(4+u^2)$. In particular, for a fixed $a>0$, 
\begin{align} \label{1_58pm}
I_k(a,T)=\Bigg(\int_{0}^1\alpha^{2k+1}e^{-2a\alpha} \, \mathrm{d}\alpha +\int_{1}^\infty\alpha^{2k}e^{-2a\alpha} \, F(\alpha,T) \, \mathrm{d}\alpha + o(1)\Bigg)T(\log T)^{2k+2}, \,\,\,\, \text{as} \,\,\, T\to\infty.
\end{align}
\end{proposition}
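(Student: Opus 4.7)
The plan is to extend the Goldston--Gonek--Montgomery approach of \cite{GGM} from $\zeta'/\zeta$ to its $k$-th derivative. The starting point, under RH, is the Hadamard-type partial fraction expansion differentiated $k$ times: for $s=\tfrac12+\delta+it$ with $\delta=a/\log T$ and $t\in[1,T]$,
\[
\left(\frac{\zeta'}{\zeta}\right)^{\!(k)}\!(s)=(-1)^k k!\sum_{\gamma}\frac{1}{(\delta+i(t-\gamma))^{k+1}}+O_k(1),
\]
where the $O_k(1)$ absorbs trivial-zero and Gamma-factor contributions, which are absolutely convergent for $k\ge 1$. Denoting this zero-sum by $\Sigma_k(t)$, squaring the modulus of $(\zeta'/\zeta)^{(k)}(s)$ and integrating on $[1,T]$ gives $I_k(a,T)=(k!)^2\int_1^T|\Sigma_k(t)|^2\,dt$ up to cross terms. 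By Cauchy--Schwarz and the a priori bound $\int_1^T|\Sigma_k|^2\,dt\ll T(\log T)^{2k+2}/a^{2k+1}$ (coming from Plancherel and \eqref{17:56pm}), these cross terms contribute $O(T(\log T)^{2k+1}/a^{2k-1})$, matching the first stated error.

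The heart of the argument is then evaluating $(k!)^2\sum_{\gamma,\gamma'}\int_1^T\frac{dt}{(\delta+i(t-\gamma))^{k+1}(\delta-i(t-\gamma'))^{k+1}}$. Following Montgomery's Plancherel-type identity, the hard cutoff $\chi_{[1,T]}(t)$ is replaced by a smooth weight whose Fourier dual is $e^{-2|\omega|}$ (the Fourier inverse of $w$); this mechanism produces $w(\gamma-\gamma')$ naturally in the outer sum. The remaining $\mathbb R$-integral is computed by residues, closing the contour in the upper half-plane at the order-$(k+1)$ pole $t=\gamma+i\delta$, and yields
\[
\int_{-\infty}^\infty\frac{dt}{(\delta+i(t-\gamma))^{k+1}(\delta-i(t-\gamma'))^{k+1}}=\frac{2\pi(2k)!}{(k!)^2(2\delta-i(\gamma-\gamma'))^{2k+1}}.
\]
Taking real parts (the sum is real by the $\gamma\leftrightarrow\gamma'$ symmetry) and applying
\[
(h_b)^{(2k)}(x)=(-1)^k(2k)!\,\mathrm{Re}\frac{1}{(b-ix)^{2k+1}}
\]
with the scaling
\[
(h_{2\delta})^{(2k)}(x)=\left(\frac{\log T}{2\pi}\right)^{\!2k+1}\!(h_{a/\pi})^{(2k)}\!\!\left(\frac{x\log T}{2\pi}\right)
\]
produces the claimed representation. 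The edge-effect error $O((\log T)^{2k+4}/a^{2k+2})$ is controlled by Lemma~\ref{21_43pm}.

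To deduce \eqref{1_58pm}, I would substitute $h_b(y)=\tfrac12\int_{-\infty}^\infty e^{-b|\omega|}e^{iy\omega}\,d\omega$ into $(h_{a/\pi})^{(2k)}$, change variables $\omega=2\pi\alpha$, interchange with the $\gamma,\gamma'$-sum, and invoke the definition of $F(\alpha,T)$ together with the symmetry $F(-\alpha,T)=F(\alpha,T)$. This yields
\[
I_k(a,T)=T(\log T)^{2k+2}\int_0^\infty\!\alpha^{2k}e^{-2a\alpha}F(\alpha,T)\,d\alpha+o\!\left(T(\log T)^{2k+2}\right)\!.
\]
Splitting $\int_0^\infty=\int_0^1+\int_1^\infty$ and invoking Montgomery's unconditional estimate $F(\alpha,T)=|\alpha|+o(1)$ uniformly on $[0,1]$ converts the first piece into $\int_0^1\alpha^{2k+1}e^{-2a\alpha}\,d\alpha$, giving \eqref{1_58pm}.

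The main obstacle is the central step: engineering the Plancherel-type smoothing so that $w(\gamma-\gamma')$ emerges, while simultaneously showing that all resulting edge-effect errors fit within the claimed bounds. This bookkeeping rests essentially on the decay estimates for $(h_{a/\pi})^{(2k)}$ provided by Lemma~\ref{21_43pm} and on standard zero-density information.
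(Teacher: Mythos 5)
Your plan reaches the same pair sum as the paper, but by a noticeably different route, and one step of your plan is described incorrectly.

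Where you diverge from the paper: after differentiating the partial-fraction expansion, the paper does \emph{not} compute $\int_1^T|\Sigma_k(t)|^2\,\dt$ directly. It first applies the identity $|w|^2=2(\re w)^2-\re(w^2)$, shifts the contour for $\re\int((\zeta'/\zeta)^{(k)})^2\,\dt$ to $\re s=3/2$ (where the Dirichlet series controls it), and is then left with $2\int_1^T(\re(\zeta'/\zeta)^{(k)})^2\,\dt$. The real part is expressed through the real kernels $h_{\sigma-1/2}$ and $\ell_{\sigma-1/2}$, and the square-integral is evaluated by Plancherel using the explicit Fourier transforms $\widehat{h_b}(y)=\pi e^{-2\pi b|y|}$ and $\widehat{\ell_b}(y)=2\pi^2|y|e^{-2\pi b|y|}$. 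You instead integrate the complex bilinear form $|\Sigma_k|^2$ and evaluate each term by residues, landing on $\re\{(2\delta-i(\gamma-\gamma'))^{-(2k+1)}\}$, which via $(h_b)^{(2k)}(x)=(-1)^k(2k)!\re\{(b-ix)^{-(2k+1)}\}$ is the same kernel. That part is correct and is a legitimate alternative: it avoids the $|w|^2$-split and the contour shift, at the cost of keeping complex quantities until the symmetry $\gamma\leftrightarrow\gamma'$ is invoked.

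The genuine gap is your account of how the weight $w(\gamma-\gamma')$ enters. It is \emph{not} produced by replacing $\chi_{[1,T]}(t)$ with a smooth weight whose Fourier dual is $e^{-2|\omega|}$; that device only generates $w$ when one uses the Poisson kernel at height $1$ (Montgomery's original $\sigma=3/2$), and here the kernel is $(h_{2\sigma-1})^{(2k)}$ at height $2\delta\to 0$. In the paper, the weight is inserted afterwards, by hand: one bounds
$\sum_{0<\gamma,\gamma'\le T}(h_{2\sigma-1})^{(2k)}(\gamma-\gamma')\bigl(1-w(\gamma-\gamma')\bigr)$
using Lemma~\ref{21_43pm} together with $\sum_{0<\gamma,\gamma'\le T} w(\gamma-\gamma')\ll T\log T\,F(0,T)\ll T\log^2 T$ (from \eqref{F formula}), and the resulting error $O(T(\log T)^{2k+1}/a^{2k-1})$ is exactly the first term in the stated remainder. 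Without this step your argument does not produce the $w$-weighted pair sum, which is the object that Montgomery's inversion formula $\sum R((\gamma-\gamma')\tfrac{\log T}{2\pi})\,w(\gamma-\gamma')=\tfrac{T\log T}{2\pi}\int\widehat{R}(\alpha)F(\alpha,T)\,\dalpha$ converts into the $F$-integral in \eqref{1_58pm}. Two further small points: the correction term in the differentiated partial-fraction formula is $O(1/|t|^k)$, not merely $O_k(1)$ (the decay in $t$ is what makes the cross-terms negligible uniformly in $a$), and the small-$\alpha$ estimate you quote should be the full Goldston--Montgomery formula $F(\alpha,T)=(T^{-2|\alpha|}\log T+|\alpha|)(1+o(1))$ on $[0,1]$; the $T^{-2\alpha}\log T$ piece is killed by the factor $\alpha^{2k}$ for $k\ge1$, which is why only $\int_0^1\alpha^{2k+1}e^{-2a\alpha}\,\dalpha$ survives.
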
 
\begin{proof} We start obtaining a bound for $\big({\zeta'}/{\zeta}\big)^{\!(k)}$. Let $s=\sigma+it$, with $\hh<\sigma\leq \frac{3}{2}$ and $t\geq 2$. From the partial fraction decomposition for $\zeta'/\zeta$ \cite[Eq. 2.12.7]{Tit}
\begin{align}  \label{2_4722pm}
\dfrac{\zeta'}{\zeta}(s)=B-\dfrac{1}{s-1}+\dfrac{1}{2}\log\pi -\dfrac{1}{2}\dfrac{\Gamma'}{\Gamma}\bigg(\dfrac{s}{2}+1\bigg)+\sum_{\rho}\bigg(\dfrac{1}{s-\rho}+\dfrac{1}{\rho}\bigg),
\end{align}
where the sum runs over the non-trivial zeros $\rho=\hh+i\gamma$ of $\zeta(s)$ and $B=-\re\sum_\rho\rho^{-1}$. Taking $k$ derivatives in \eqref{2_4722pm} and using the estimate\footnote{\,\,\,\,\,\,It can be proved as the proof of Stirling's formula, but starting after taking $k$ derivatives in \cite[Eq. (34) in p. 202]{ahlfors}.}
$$
\left(\dfrac{\Gamma'}{\Gamma}\right)^{\!\!\!(k)}\!\!\!\!\!(w)=O\bigg(\dfrac{1}{|w|^k}\bigg), \,\,\,\,\,\mbox{for} \,\,\,\,\,\re{w}\geq \sigma_0>0,
$$
it follows that
\begin{align} \label{2_47pm}
\left(	\dfrac{\zeta'}{\zeta}\right)^{\!\!\!(k)}\!\!\!\!\!(s)=(-1)^k\,k!\,\sum_{\rho}\dfrac{1}{(s-\rho)^{k+1} }+O\bigg(\dfrac{1}{|t|^k}\bigg).
\end{align}
Since $\sum_{|\gamma-t|\leq 1}1=O(\log t)$, we have
\begin{align*}
\Bigg|\sum_{\gamma>t+1}\dfrac{1}{(s-\rho)^{k+1}}\Bigg|\leq \sum_{n\geq 1}\Bigg\{\!\sum_{t+n<\gamma\leq t+n+1}\dfrac{1}{|t-\gamma|^{k+1}}\!\Bigg\}\leq \sum_{n\geq 1}\Bigg\{\!\sum_{t+n<\gamma\leq t+n+1}\dfrac{1}{n^{k+1}}\!\Bigg\}\ll \sum_{n\geq 1}\dfrac{\log(t+n)}{n^{k+1}}\ll \log t.
\end{align*}
Similarly, we can prove the same estimate when the sum runs over $\gamma<t-1$. 
Therefore, in \eqref{2_47pm} we obtain,\footnote{\,\,\,\,\,\,The estimate \eqref{2_48pm} also holds when $k=0$.} for $\hh<\sigma\leq \frac{3}{2}$ and $t\geq 2$, 
\begin{align} \label{2_48pm}
\Bigg|\left(	\dfrac{\zeta'}{\zeta}\right)^{\!\!\!(k)}\!\!\!\!\!(\sigma+it)\Bigg|\ll \dfrac{\log t}{(\sigma-\hh)^{k+1}}.
\end{align}
Now, let us prove Proposition \ref{GGM_Lemma}. Using the elementary identity $
|w|^2= 2(\re\{w\})^2-\re\{w^2\}$ for all $w\in\C$, we write
\begin{align} \label{17_52pm} 
	\int_1^T \left|\left(	\dfrac{\zeta'}{\zeta}\right)^{\!\!\!(k)}\!\!\!\!\!(\sigma+it) \right|^2\!\!  \mathrm{d}t = 2\int_1^T\!\!\bigg(\re\left(	\dfrac{\zeta'}{\zeta}\right)^{\!\!\!(k)}\!\!\!\!\!(\sigma+it)\bigg)^{\!\!2}  \mathrm{d}t -\re\int_1^T\!\! \bigg(\!\left(	\dfrac{\zeta'}{\zeta}\right)^{\!\!\!(k)}\!\!\!\!\!(\sigma+it)\!\bigg)^{\!\!2} \mathrm{d}t .
\end{align} 
We estimate the second integral on the right-hand side of \eqref{17_52pm} by pulling the contour to the right, up to the line $\re{s}=\frac{3}{2}$ (see \cite[p. 111]{GGM}). In fact, to estimate the vertical edge at $\re{s}=\frac{3}{2}$ we use the representation as a Dirichlet series of $(\zeta'/\zeta)^{\!(k)}(s)$, and for the upper horizontal edge we use the estimate \eqref{2_48pm}. Therefore, in \eqref{17_52pm} we get 
\begin{align} \label{17_54pm} 
	\int_1^T \left|\left(	\dfrac{\zeta'}{\zeta}\right)^{\!\!\!(k)}\!\!\!\!\!(\sigma+it) \right|^2 \!\! \mathrm{d}t =2\int_1^T \!\!\bigg(\re\left(	\dfrac{\zeta'}{\zeta}\right)^{\!\!\!(k)}\!\!\!\!\!(\sigma+it)\bigg)^{\!\!2} \mathrm{d}t  +O\Bigg(\dfrac{\log ^2T}{(\sigma-\hh)^{2k+1}}\Bigg).
\end{align} 
On the other hand, note that
\begin{align*}
	(-1)^k\,k!\,\,&\re\biggl\{\sum_{\rho}\dfrac{1}{(s-\rho)^{k+1}}\biggr\} \\
	& = \sum_{\gamma}\re\biggl\{(-i)^k\dfrac{d^k}{dx^k}\Bigg(\dfrac{1}{(\sigma-\hh)+ix}\Bigg)\Biggr\}\Bigg|_{x=t-\gamma} \\
	& =\sum_{\gamma}\re\biggl\{(-i)^k\dfrac{d^k}{dx^k}\Bigg(\dfrac{\sigma-\hh}{(\sigma-\hh)^2+x^2}\Bigg)+(-i)^{k+1}\dfrac{d^k}{dx^k}\Bigg(\dfrac{x}{(\sigma-\hh)^2+x^2}\Bigg)\Biggr\}\Bigg|_{x=t-\gamma} \\
	& = \sum_{\gamma}\Bigg\{\re\big\{(-i)^k\big\}\dfrac{d^k}{dx^k}\Bigg(\dfrac{\sigma-\hh}{(\sigma-\hh)^2+x^2}\Bigg)+\re\big\{(-i)^{k+1}\big\}\dfrac{d^{k-1}}{dx^{k-1}}\Bigg(\dfrac{(\sigma-\hh)^2-x^2}{((\sigma-\hh)^2+x^2)^2}\Bigg)\Bigg\}\Bigg|_{x=t-\gamma}.
\end{align*}
Therefore, taking the real part of \eqref{2_47pm} we arrive at
\begin{align}  \label{2_472pm}
	\re\left(	\dfrac{\zeta'}{\zeta}\right)^{\!\!\!(k)}\!\!\!\!\!(\sigma+it)+O\bigg(\dfrac{1}{|t|^k}\bigg)=\sum_{\gamma}f_{k,\sigma}(t-\gamma),
\end{align}
where
$
f_{k,\sigma}(x)=\re\{(-i)^k\}\,(h_{\sigma-1/2})^{(k)}(x)+\re\{(-i)^{k+1}\}\,(\ell_{\sigma-1/2})^{(k-1)}(x),
$
and the functions $h_{\sigma-1/2}$ and $\ell_{\sigma-1/2}$ are defined in \eqref{Def_Poisson_kernel_beta} and \eqref{19_13pm} respectively. Using the Fourier transforms\footnote{\,\,\,\,\,\,For a function $f \in L^1(\mathbb{R})$, we define its Fourier transform as
	$\widehat{f}(y) = \int_{-\infty}^\infty  e^{-2\pi i y x} \, f(x)\,\mathrm{d}x
	$, and the convolution of $f$ and $g$ is defined as $(f\ast g)(y)=\int_{-\infty}^\infty  f(x) \,g(y-x)\,\mathrm{d}x$.} 
$$
\widehat{\,h_b\,}(y)= \pi e^{-2\pi b|y|} \,\,\,\,\,\,  \mbox{and} \,\,\,\,\,\, \widehat{\,\ell_b\,}(y)= 2\pi^2|y|e^{-2\pi b|y|},
$$
the Fourier transform of $f_{k,\sigma}$  is given by
\begin{align} \label{15_00}
\widehat{f_{k,\sigma}}(y)=\Big(\big(\re\big\{i^k\big\}\big)^{\!2}y^k+\big(\re\big\{i^{k+1}\big\}\big)^{\!2}y^{k-1}|y|\Big)(-1)^k2^k\pi^{k+1}e^{-2\pi(\sigma-1/2)|y|}.
\end{align} 
Now, we square \eqref{2_472pm}, integrate from $1$ to $T$, and use \eqref{2_48pm} to get
\begin{align}  \label{23_57}
\int_1^T\!\left(\!\re\left(	\dfrac{\zeta'}{\zeta}\right)^{\!\!\!(k)}\!\!\!\!\!(\sigma+it)\!\right)^{\!\!2}\!\!\mathrm{d}t + O\Bigg(\dfrac{\log ^2T}{(\sigma-\hh)^{k+1}}\Bigg)= \int_{1}^T\!\left(\sum_{\gamma}f_{k,\sigma}(t-\gamma)\right)^{\!\!2}\!\!\mathrm{d}t.
\end{align} 
We proceed to analyze the right-hand side of  \eqref{23_57}. From Lemma \ref{21_43pm}, it follows that\footnote{\,\,\,\,\,\,We highlight that depending on the parity of $k$, only one of the terms of $f_{k,\sigma}$ appears.}
$$|f_{k,\sigma}(x)|\ll \dfrac{h_{\sigma-1/2}(x)}{(\sigma-1/2)^k},$$ and using Montgomery's argument \cite{M} we can restrict the inner sum over the zeros of $\zeta(s)$ such that $0<\gamma\leq T$ and extend the integral to all $t\in \R$, with a final error at most $\ll  (\sigma-1/2)^{-2k}\log^3T + (\sigma-1/2)^{-2k-2}{\log^2T}$ (see \cite[p. 113]{GGM}). Therefore, from \eqref{15_00} and using the fact that $f_{k,\sigma}$ is even, 
\begin{align*} 
	\int_{-\infty}^{\infty}\left(\sum_{0<\gamma\leq T}f_{k,\sigma}(t-\gamma)\right)^{\!\!2} \!\! \mathrm{d}t & = \sum_{0<\gamma,\gamma'\leq T}\big(f_{k,\sigma}\ast f_{k,\sigma}\big)(\gamma-\gamma')  = \sum_{0<\gamma,\gamma'\leq T}\widehat{\big(\widehat{ \!\!\!\!\!f_{k,\sigma}\,\,\,\,\,}\big)^{2}}(\gamma-\gamma') \\
	& =\pi(-1)^k\!\sum_{0<\gamma,\gamma'\leq T}\big(h_{2\sigma-1}\big)^{\!(2k)}{(\gamma-\gamma')}.
\end{align*} 
We want to add the weight $w(\gamma-\gamma')$ to the last sum. In fact, Lemma \ref{21_43pm} gives the bound
\begin{align*}
\Bigg|\sum_{0<\gamma,\gamma'\leq T}\big(h_{2\sigma-1}\big)^{\!(2k)}(\gamma-\gamma')\big(1-w(\gamma-\gamma')\big)\bigg|&\ll\dfrac{1}{(2\sigma-1)^{2k-1}}\sum_{0<\gamma,\gamma'\leq T}\dfrac{4}{4+(\gamma-\gamma')^2}\\
& \ll \dfrac{T\log T\,F(0,T)}{(2\sigma-1)^{2k-1}}\ll \dfrac{T\log^2T}{(2\sigma-1)^{2k-1}},
\end{align*}where in the last estimate we have used \eqref{F formula}. Thus,
\begin{align*}  
	\int_{1}^T\!\left(\sum_{\gamma}f_{k,\sigma}(t-\gamma)\right)^{\!\!\!2}\!\mathrm{d}t & =\pi(-1)^k\!\!\sum_{0<\gamma,\gamma'\leq T}\!\big(h_{2\sigma-1}\big)^{\!(2k)}{(\gamma-\gamma')}\,w(\gamma-\gamma')  \\
	& \,\,\,\,\,\,+ O\bigg(\dfrac{T\log^2T}{(2\sigma-1)^{2k-1}}+\dfrac{\log^3T}{(2\sigma-1)^{2k}}+ \dfrac{\log^2T}{(2\sigma-1)^{2k+2}}\bigg). 
\end{align*} 
Now, considering that $\sigma=\hh+\frac{a}{\log T}$  \,for $0<a\ll 1$ and using the fact that $h_{2\sigma-1}(x)=h_{a/\pi}(x\log T/2\pi)\log T/2\pi$ for $x\in\R$, we obtain in \eqref{23_57} 
\begin{align*} 
	\int_1^T\!\left(\!\re\left(	\dfrac{\zeta'}{\zeta}\right)^{\!\!\!(k)}\!\!\!\!\!(\sigma+it)\!\right)^{\!\!2}\!\!\mathrm{d}t &=\dfrac{(-1)^k}{2^{2k+1}\pi^{2k}}\,(\log T)^{2k+1}\!\!\sum_{0<\gamma,\gamma'\leq T}\!\big(h_{a/\pi}\big)^{\!(2k)}\bigg((\gamma-\gamma')\dfrac{\log T}{2\pi}\bigg)w(\gamma-\gamma') \\
	& \,\,\,\,\,\,\,\,+ O\bigg(\dfrac{T(\log T)^{2k+1}}{a^{2k-1}}+\dfrac{(\log T)^{2k+4}}{a^{2k+2}}\bigg). 
\end{align*} 
Inserting it in \eqref{17_54pm}  we conclude that
\begin{align}  \label{00_55pm}
	\begin{split} 
		\int_1^T \left|\left(\dfrac{\zeta'}{\zeta}\right)^{\!\!\!(k)}\!\!\!\left( \frac{1}{2}+\frac{a}{\log T}+it\right)\right|^2 \! \!\mathrm{d}t  &=\dfrac{(-1)^k}{2^{2k}\pi^{2k}}\,\,(\log T)^{2k+1}\!\!\sum_{0<\gamma,\gamma'\leq T}\!\big(h_{a/\pi}\big)^{\!(2k)}\bigg((\gamma-\gamma')\dfrac{\log T}{2\pi}\bigg)w(\gamma-\gamma') \\
		& \,\,\,\,\,\,\,\,+ O\bigg(\dfrac{T(\log T)^{2k+1}}{a^{2k-1}}+\dfrac{(\log T)^{2k+4}}{a^{2k+2}}\bigg).
	\end{split} 
\end{align}
From Fourier inversion, it is known that for any function $R \in L^1(\mathbb{R})$ such that $\widehat{R} \in L^1(\mathbb{R})$ we have the formula (see \cite[Eq. (3)]{M})
\begin{equation*} 
	\sum_{0<\gamma,\gamma'\le T} R\!\left((\gamma-\gamma') \frac{\log T}{2\pi} \right) w(\gamma-\gamma') = \frac{T\log T}{2\pi} \int_{-\infty}^\infty\widehat{R}(\alpha) \, F(\alpha,T) \, \mathrm{d}\alpha.
\end{equation*}
Applying this formula to the function $(h_{a/\pi})^{(2k)}$ and using the fact that $\widehat{(h_b)^{(2k)}}(y)=(-1)^k2^{2k}\pi^{2k+1}y^{2k}e^{-2\pi b|y|}$, we get in \eqref{00_55pm} that, for a fixed $a>0$,
\begin{align} \label{1_38am}  
		\int_1^T \left|\left(\dfrac{\zeta'}{\zeta}\right)^{\!\!\!(k)}\!\!\!\left( \frac{1}{2}+\frac{a}{\log T}+it\right)\right|^2 \! \mathrm{d}t    &=
\dfrac{T(\log T)^{2k+2}}{2}\int_{-\infty}^\infty\!\alpha^{2k}e^{-2a|\alpha|} \, F(\alpha,T) \, \mathrm{d}\alpha + O\big(T(\log T)^{2k+1}\big). 
\end{align}
Refining the original work of Montgomery \cite{M}, Goldston and Montgomery \cite[Lemma 8]{GM} proved that, under RH, 
\begin{equation}\label{F formula}
	F(\alpha,T) = \big(T^{-2|\alpha|}\log T + |\alpha| \big)(1+o(1)) , \quad \text{as } T\to \infty, 
\end{equation} 
uniformly for $0\le |\alpha| \le 1$.  Using \eqref{F formula} and the fact that $F(\alpha,T)=F(-\alpha,T)$ for all $\alpha\in\R$, we have 
$$
\int_{-\infty}^\infty\!\alpha^{2k}e^{-2a|\alpha|} \, F(\alpha,T) \, \mathrm{d}\alpha = 2\int_{0}^1\!\alpha^{2k+1}e^{-2a\alpha} \, \mathrm{d}\alpha +2\int_{1}^\infty\!\alpha^{2k}e^{-2a\alpha}F(\alpha,T)  \, \mathrm{d}\alpha + o(1).
$$
Inserting this in \eqref{1_38am} we arrive at \eqref{1_58pm}. 
\end{proof}

\section{A Tauberian lemma and the Proof of Theorem \ref{Prin_theorem}}

\subsection{A Tauberian lemma} The following lemma can be seen as a generalization\footnote{\,\,\,\,\,\,See \cite{Bayulot} for another extension of \cite[Lemma 2]{GGM} depending of certain measures.} of  \cite[Lemma 2]{GGM}, where the case $G\equiv 1$  was considered. The proof uses Karamata's method and some examples of these Tauberian lemmas are given in \cite[Section 7.12]{Tit}.

\begin{lemma}\label{16_10pm} Let $f(\alpha, T)\geq 0$ be a function such that the function $\alpha\mapsto f(\alpha, T)$ is continuous for each $T\geq 2$ fixed, and for $\beta>0$ and $T\geq 2$,	\begin{align}  \label{24_36pm}
	\int_{0}^{\beta}\!f(\alpha,T)\, \mathrm{d}\alpha  \ll\beta+1.
\end{align} 
Let $G$ be a polynomial such that $G(\alpha)>0$ for $\alpha \in[0,\infty)$. The following statements are equivalent:
\begin{enumerate}
	\smallskip
	\item[\textup{(A)}] $\displaystyle\int_{0}^\infty \!f(\alpha,T)\,G(\alpha)\,e^{-b\alpha}\, \mathrm{d}\alpha \sim \int_{0}^\infty \!G(\alpha)\,e^{-b\alpha}\, \mathrm{d}\alpha, \!\quad \text{as } T\to \infty \text{ for any fixed } b>0$.
	
	\smallskip
	
	\item[\textup{(B)}] $\displaystyle\dfrac{1}{d-c}\int_{c}^d \!f(\alpha,T)\, \mathrm{d}\alpha \sim 1,\!\quad \text{as } T\to \infty \text{ for any fixed } 0\leq c<d.$
	\smallskip
\end{enumerate}
\end{lemma}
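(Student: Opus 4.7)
The plan is to prove the two implications separately. The easier direction (B) $\Rightarrow$ (A) comes from integration by parts and dominated convergence; the harder direction (A) $\Rightarrow$ (B) follows Karamata's method.

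For (B) $\Rightarrow$ (A), set $F(\alpha, T) := \int_0^\alpha f(\beta, T) \, \d\beta$. Taking $c = 0$ in (B) gives $F(\alpha, T) \to \alpha$ as $T \to \infty$ for every fixed $\alpha > 0$, while \eqref{24_36pm} provides the uniform bound $F(\alpha, T) \ll \alpha + 1$. Integration by parts converts the integral in (A) to
\[
\int_0^\infty f(\alpha, T) G(\alpha) e^{-b\alpha} \, \d\alpha = \int_0^\infty F(\alpha, T) \bigl(b G(\alpha) - G'(\alpha)\bigr) e^{-b\alpha} \, \d\alpha,
\]
and since $(\alpha + 1)|bG(\alpha) - G'(\alpha)|e^{-b\alpha}$ is integrable, dominated convergence yields the limit $\int_0^\infty \alpha (b G(\alpha) - G'(\alpha)) e^{-b\alpha} \, \d\alpha$, which reduces back to $\int_0^\infty G(\alpha) e^{-b\alpha} \, \d\alpha$ via one more integration by parts.

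For (A) $\Rightarrow$ (B), the first step is to upgrade (A) by linearity: for any fixed $b_0 > 0$ and any polynomial $p(u) = \sum_{k=0}^N a_k u^k$, expanding $p(e^{-b_0\alpha})e^{-b_0\alpha} = \sum_k a_k e^{-(k+1)b_0\alpha}$ and applying (A) term by term at the exponents $b = (k+1) b_0$ gives
\[
\int_0^\infty f(\alpha, T) G(\alpha) p(e^{-b_0 \alpha}) e^{-b_0 \alpha} \, \d\alpha \longrightarrow \int_0^\infty G(\alpha) p(e^{-b_0 \alpha}) e^{-b_0 \alpha} \, \d\alpha.
\]
Fix $0 \leq c < d$ and $\epsilon > 0$, and choose continuous sandwich functions $\chi_\epsilon^- \leq \mathbf{1}_{[c, d]} \leq \chi_\epsilon^+$ compactly supported in $[(c - \epsilon)_+, d + \epsilon]$ with $\int(\chi_\epsilon^+ - \chi_\epsilon^-)\,\d\alpha = O(\epsilon)$. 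The goal is to build test functions of the form $\phi^\pm(\alpha) := G(\alpha) p^\pm(e^{-b_0 \alpha}) e^{-b_0 \alpha}$ with the pointwise sandwich $\phi^- \leq \chi_\epsilon^- \leq \chi_\epsilon^+ \leq \phi^+$ and $\int(\phi^+ - \phi^-)\,\d\alpha$ small; granting such $\phi^\pm$, the Karamata convergence above combined with $f \geq 0$ gives $\int_c^d f\,\d\alpha \to d - c$ upon letting $\eta \to 0$ and then $\epsilon \to 0$.

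To build $p^\pm$, substitute $u = e^{-b_0 \alpha}$ and set $g(u) := u\, G(-\log u/b_0)$, which extends continuously to $[0, 1]$ with $g(0) = 0$ and $g(u) > 0$ on $(0, 1]$. Because $\chi_\epsilon^\pm$ has compact $\alpha$-support, the quotient $h^\pm(u) := \chi_\epsilon^\pm(-\log u/b_0)/g(u)$ vanishes in a neighborhood of $u = 0$ and hence extends continuously to all of $[0, 1]$. By the Weierstrass theorem pick polynomials $q^\pm$ with $\|q^\pm - h^\pm\|_\infty \leq \eta$ and set $p^- := q^- - \eta$ and $p^+ := q^+ + \eta$; this delivers the required pointwise sandwich with $L^1$ defect of order $\eta\int_0^\infty G(\alpha)e^{-b_0\alpha}\,\d\alpha$. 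The principal obstacle is exactly this construction: the vanishing $g(0) = 0$ would obstruct the Weierstrass approximation were it not for the compactness of $\chi_\epsilon^\pm$ in $\alpha$, which forces $\chi_\epsilon^\pm(-\log u/b_0)$ to vanish on a neighborhood of $u = 0$ and restores continuity of $h^\pm$ at the origin.
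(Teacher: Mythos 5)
Your proof is correct and follows the same Karamata strategy as the paper in both directions. For (B) $\Rightarrow$ (A), both you and the paper integrate by parts, appeal to the uniform bound \eqref{24_36pm}, and invoke dominated convergence. For (A) $\Rightarrow$ (B), both change variables $u=e^{-b_0\alpha}$, divide the indicator by the weight $g(u)=u\,G(-\log u/b_0)$, apply Weierstrass to obtain one-sided polynomial approximants, expand the polynomials as finite linear combinations of exponentials, and feed these into (A). The one place where you differ technically is in how the discontinuity of $\chi_{[c,d]}$ is handled: the paper produces a one-sided polynomial majorant $P\geq h$ of the \emph{discontinuous} function $h(u)=\chi_{[c,d]}(-\log u)/(uG(-\log u))$ with small $L^2$ defect and controls the resulting error via Cauchy--Schwarz, whereas you first replace $\chi_{[c,d]}$ by continuous cutoffs $\chi_\epsilon^\pm$ so that $h^\pm$ is honestly continuous on $[0,1]$, then apply the uniform Weierstrass theorem and track the $L^1$ defect directly. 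Your version makes explicit why Weierstrass is applicable (the compact $\alpha$-support of $\chi_\epsilon^\pm$ forces $h^\pm$ to vanish near $u=0$, where $g(0)=0$ would otherwise cause trouble), a point the paper glosses over; you also keep a general base exponent $b_0>0$ rather than the paper's implicit choice $b_0=1$. These are minor variants of the same argument and neither affects correctness.
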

\begin{proof}
Let us start assuming (A). Let $0\leq c<d$ be fixed, and define the function $h:[0,1]\to \R$ by
$$ 
	h(u)= \left\{ \begin{array}{lcc}
	0, &  \!\!\! \text{if}  &  \!\!\!\!\! 0\leq u <e^{-d} 
	\\ \dfrac{1}{u\,G(-\log u)}, & \!\!\!  \text{if}  &\! e^{-d} \leq u \leq e^{-c}
	\\ 0, & \!\!\! \text{if}  &\!\!\!\!\!   \!e^{-c} < u \leq 1. 
\end{array}
\right.$$
By the Weierstrass approximation theorem, for any $\varepsilon>0$ sufficiently small we can construct a polynomial $P(u)=\sum_{n=0}^{N}a_nu^n$ (depending on $\varepsilon$) such that 
\begin{align} \label{0_53pm}
h(u)\leq P(u)\,\,\,\text{for all}\,\, u\in [0,1
], \,\,\, \,\,\,\text{and} \,\,\,\,\,\, \int_{0}^{1}\!(P(u)-h(u))^2 \, \mathrm{d}u =O(\varepsilon).
\end{align}
Defining the function $Q(\alpha)=e^{-\alpha}P\big(e^{-\alpha}\big)$, it follows that\footnote{\,\,\,\,\,\,Here $\chi_{[c,d]}(\alpha)$ denotes the characteristic function of the interval $[c,d]$.} $$\frac{\chi_{[c,d]}(\alpha)}{G(\alpha)}\leq Q(\alpha)$$ for all $\alpha\geq 0$. Recalling that $G(\alpha)>0$ we have
\begin{align*}
\int_{c}^{d}\!f(\alpha,T)\, \mathrm{d}\alpha  \leq \int_{0}^{\infty}\!f(\alpha,T)\,G(\alpha)\,{Q(\alpha )}\, \mathrm{d}\alpha & = \int_{0}^{\infty}\!f(\alpha,T)\,G(\alpha)\,\sum_{n=0}^{N}a_ne^{-(n+1)\alpha}\, \mathrm{d}\alpha \\
& = \sum_{n=0}^{N}a_n\int_{0}^{\infty}\!f(\alpha,T)\,G(\alpha)\,e^{-(n+1)\alpha}\,\mathrm{d}\alpha.
\end{align*}
Taking $\limsup$ as $T\to\infty$ and using (A) we arrive at
\begin{align} \label{1_09am}
	\limsup_{T\to\infty}\int_{c}^{d}\! f(\alpha,T)\, \mathrm{d}\alpha \leq  \sum_{n=0}^{N}a_n\int_{0}^{\infty}\!G(\alpha)\,e^{-(n+1)\alpha}\,\mathrm{d}\alpha
\end{align}
By a change of variables, the definition of $h$, the Cauchy-Schwarz inequality and \eqref{0_53pm}, one can see that
\begin{align*}
\sum_{n=0}^{N}a_n\int_{0}^{\infty}\!G(\alpha)\,e^{-(n+1)\alpha}\,\mathrm{d}\alpha& =\int_{0}^{1}\!G(-\log u)\,P(u)\,\mathrm{d}u\\
& = \int_{0}^{1}\!G(-\log u)\,h(u)\,\mathrm{d}u +\int_{0}^{1}\!G(-\log u)\,(P(u)-h(u))\,\mathrm{d}u \\
& = \int_{e^{-d}}^{e^{-c}}\dfrac{1}{u}\,\mathrm{d}u +O\left(\Bigg(\int_{0}^{1}\!G^2(-\log u)\,\mathrm{d}u\Bigg)^{\!\!1/2}\Bigg(\int_{0}^{1}\!(P(u)-h(u))^2\,\mathrm{d}u\Bigg)^{\!\!1/2}\right) \\
& = d-c + O(\varepsilon^{1/2}).
\end{align*} 
Letting $\varepsilon\to 0$ and combining this with \eqref{1_09am}, we conclude that
\begin{align*} 
	\limsup_{T\to\infty}\int_{c}^{d}\!f(\alpha,T)\, \mathrm{d}\alpha \leq d-c.
\end{align*}
Similarly, we can proceed to prove that 
\begin{align*} 
	d-c\leq \liminf_{T\to\infty}\int_{c}^{d}f(\alpha,T)\, \mathrm{d}\alpha.
\end{align*}
Therefore we obtain (B). Let us prove that (B) implies (A). Using integration by parts and \eqref{24_36pm}, we see that
\begin{align}
	\displaystyle\int_{0}^\infty \!f(\alpha,T)\,G(\alpha)\,e^{-b\alpha}\, \mathrm{d}\alpha  =  - 	\displaystyle\int_{0}^\infty\!\Bigg(\int_{0}^\alpha \!f(\beta,T)\, \mathrm{d}\beta\Bigg) \big(G(\alpha)\,e^{-b\alpha}\big)'\, \mathrm{d}\alpha.
\end{align}
Finally, using (B), the dominated convergence theorem, and integration by parts one more time, we conclude.
\end{proof}

\subsection{Proof of Theorem \ref{Prin_theorem}}
Since the case $k=0$ was considered in the work of Goldston, Gonek and Montgomery (see \cite[Theorem 3]{GGM}), assume $k\geq 1$. Using the identity\footnote{\,\,\,\,\,\,See \cite[Eq. 3.351-1 and 3.351-2]{GR}.}
$$
\int_{0}^1\!\alpha^{2k+1}e^{-2a\alpha} \, \mathrm{d}\alpha +\int_{1}^\infty\!\alpha^{2k}e^{-2a\alpha}  \, \mathrm{d}\alpha = \dfrac{(2k+1)!}{(2a)^{2k+2}}-\sum_{m=1}^{2k+1}\dfrac{m\,(2k)!}{(2k+1-m)!}\dfrac{e^{-2a}}{(2a)^{m+1}}, \,\,\,\,\,\,\mbox{for any} \,\,a>0,
$$
and \eqref{1_58pm} we have that (II) is equivalent to
$$
\int_{1}^\infty\!\alpha^{2k}e^{-2a\alpha} \, F(\alpha,T) \, \mathrm{d}\alpha \sim \int_{1}^\infty\!\alpha^{2k}e^{-2a\alpha} \, \mathrm{d}\alpha.
$$
A translation gives that (II) is equivalent to
$$
\int_{0}^\infty\!(\alpha+1)^{2k}e^{-2a\alpha} \, F(\alpha+1,T) \, \mathrm{d}\alpha \sim \int_{0}^\infty\!(\alpha+1)^{2k}e^{-2a\alpha} \, \mathrm{d}\alpha.
$$
Using Lemma \ref{16_10pm} with the function $f(\alpha,T)=F(\alpha+1,T)$, $G(\alpha)=(\alpha+1)^{2k}$, and $b=2a$ we conclude the proof. We remark that the additional constraint \eqref{24_36pm} follows from \eqref{17:56pm}.

\medskip

\section{Proof of Corollary \ref{17_05pm}} Assume RH. From \cite[p. 340]{Tit}, for each $n\in \N$  there is $T_n\in (n,n+1)$ such that for $-1\leq\sigma\leq 2$,
\begin{align} \label{19_33}
\bigg|\dfrac{\zeta'}{\zeta}(\sigma+iT_n)\bigg|\ll (\log T_n)^2.
\end{align} 
Now, let $k\geq 1$ be an integer, $0<a\ll 1$ and $T\geq 4$ $, T\notin\N$. Choose $n\in \N$ such that $T, T_n \in (n,n+1)$ and $T_n$ satisfies \eqref{19_33}. Note that $\log T_n \asymp \log T$. Using integration by parts $k$ times and the bound \eqref{2_48pm}, we have
\begin{align*} 
\int_1^{T_n} \left|\left(\dfrac{\zeta'}{\zeta}\right)^{\!\!\!(k)}\!\!\!\left( \frac{1}{2}+\dfrac{a}{\log T}+it\right) \right|^2  \!\!\mathrm{d}t & = \int_1^{T_n} \left(\dfrac{\zeta'}{\zeta}\right)^{\!\!\!(k)}\!\!\!\left( \frac{1}{2}+\dfrac{a}{\log T}+it\right)\,\left(\dfrac{\zeta'}{\zeta}\right)^{\!\!\!(k)}\!\!\left( \frac{1}{2}+\dfrac{a}{\log T}-it\right)  \mathrm{d}t   \\
& = \dfrac{1}{i}\int_{\hh-\frac{a}{\log T}+i}^{\hh-\frac{a}{\log T}+iT_n} \left(\dfrac{\zeta'}{\zeta}\right)^{\!\!\!(k)}\!\!\!\left(s + \dfrac{2a}{\log T}\right)\,\left(\dfrac{\zeta'}{\zeta}\right)^{\!\!\!(k)}\!\!\!\!\left(1-s\right)  \mathrm{d}s \\
& = \dfrac{1}{i}\int_{\hh-\frac{a}{\log T}+i}^{\hh-\frac{a}{\log T}+iT_n} \left(\dfrac{\zeta'}{\zeta}\right)^{\!\!\!(2k)}\!\!\!\left(s + \dfrac{2a}{\log T}\right)\dfrac{\zeta'}{\zeta}\!\left(1-s\right)  \mathrm{d}s + O\bigg(\dfrac{(\log T)^{2k+3}}{a^{2k+1}}\bigg).  
\end{align*}
We use the residue theorem on the rectangle with vertices $\hh-\frac{a}{\log T}+i, 2+i, 2+iT_n$ and $\hh-\frac{a}{\log T}+iT_n$ (since RH holds, the function $(\zeta'/\zeta)^{(2k)}\big(s+\frac{2a}{\log T}\big)$ is analytic in this rectangle) and the bounds \eqref{2_48pm} and \eqref{19_33} to deduce that
\begin{align*}
	\int_1^{T_n} \left|\left(\dfrac{\zeta'}{\zeta}\right)^{\!\!\!(k)}\!\!\!\left( \frac{1}{2}+\dfrac{a}{\log T}+it\right) \right|^2  \!\!\mathrm{d}t
	& = 2\pi\sum_{0<\gamma<T_n}\left(\dfrac{\zeta'}{\zeta}\right)^{\!\!\!(2k)}\!\!\!\left(\rho+ \dfrac{2a}{\log T}\right)\\
	& \,\,\,\,\,\,\, +\dfrac{1}{i}\int_{2+i}^{2+iT_n} \left(\dfrac{\zeta'}{\zeta}\right)^{\!\!\!(2k)}\!\!\!\left(s + \dfrac{2a}{\log T}\right)\,\dfrac{\zeta'}{\zeta}\!\left(1-s\right)  \mathrm{d}s + O\bigg(\dfrac{(\log T)^{2k+4}}{a^{2k+1}}\bigg).
\end{align*}
It is known that $\zeta(s)$ satisfies the functional equation $\zeta(s)=\chi(s)\zeta(1-s)$, where
$$
\chi(s)=\dfrac{\pi^{s-\frac{1}{2}} \Gamma(\hh-\frac{s}{2} )}{\Gamma(\frac{s}{2})}.
$$
Then, we write
\begin{align} \label{20_26}
\dfrac{1}{i}\int_{2+i}^{2+iT_n} \left(\dfrac{\zeta'}{\zeta}\right)^{\!\!\!(2k)}\!\!\!\left(s + \dfrac{2a}{\log T}\right)\,\dfrac{\zeta'}{\zeta}\!\left(1-s\right)  \mathrm{d}s = \dfrac{1}{i}\int_{2+i}^{2+iT_n} \left(\dfrac{\zeta'}{\zeta}\right)^{\!\!\!(2k)}\!\!\!\left(s + \dfrac{2a}{\log T}\right)\,\bigg(\dfrac{\chi'}{\chi}\!\left(s\right)-\dfrac{\zeta'}{\zeta}\!\left(s\right)\bigg) \mathrm{d}s.
\end{align} 
Using the estimate
$$
\dfrac{\chi'}{\chi}(\sigma+it)=-\log\bigg|\dfrac{t}{2\pi}\bigg| + O\bigg(\dfrac{1}{|t|}\bigg),  \,\,\, \text{for}\,\,\, |t|\geq 1 \,\,\,
\text{and} \,\,\,|\sigma|\ll 1,
$$
and the representation as a Dirichlet series of $(\zeta'/\zeta)^{(2k)}(s)$ in the right hand-side of \eqref{20_26}, we integrate term by term the right-hand side of \eqref{20_26} to obtain $O(\log T)$. Therefore, we arrive at
\begin{align*}
	\int_1^{T_n} \left|\left(\dfrac{\zeta'}{\zeta}\right)^{\!\!\!(k)}\!\!\!\left( \frac{1}{2}+\dfrac{a}{\log T}+it\right) \right|^2  \!\!\mathrm{d}t
	& = 2\pi\sum_{0<\gamma<T_n}\left(\dfrac{\zeta'}{\zeta}\right)^{\!\!\!(2k)}\!\!\!\left(\rho+ \dfrac{2a}{\log T}\right)+ O\bigg(\dfrac{(\log T)^{2k+4}}{a^{2k+1}}\bigg).
\end{align*}
We can replace $T_n$ by $T$ using \eqref{2_48pm} and $\sum_{|t-\gamma|\leq 1}1=O(\log t)$ with an error at most $\ll (\log T)^{2k+4}/a^{2k+2}$. Therefore, we conclude for $0<a\ll  1$ and $T$ sufficiently large, that
\begin{align*}
	I_k(a,T)= 2\pi D_k(2a,T)+ O\bigg(\dfrac{(\log T)^{2k+4}}{a^{2k+2}}\bigg).
\end{align*}
Finally, we use Theorem \ref{Prin_theorem} to conclude.

\section*{Acknowledgments}
\noindent A.C. was supported by Grant $275113$ of the Research Council of Norway. I would like to thank Oscar Quesada-Herrera and the referee of this paper for their valuable suggestions.

\end{document}